\newtheorem{theorem}{Theorem}[section]
\newtheorem{lemma}[theorem]{Lemma}
\newtheorem{proposition}[theorem]{Proposition}
\theoremstyle{definition}
\theoremstyle{remark}
\newtheorem{remark}[theorem]{Remark}
\numberwithin{equation}{section}
\def\b1{\bold 1}
\def\tf{\widetilde f}
\begin{document}

\setcounter{page}{1}

\title[Strongly Operator Convex Functions]{Some Methods for Constructing New Operator Monotone Functions from Old Ones}

\author[Lawrence G.~Brown]{Lawrence G.~Brown$^1$}

\address{$^{1}$Department of Mathematics, Purdue University, 150 N.~University Street, West Lafayette, IN\ \ 47907\ USA}
\email{\textcolor[rgb]{0.00,0.00,0.84}{lgb@math.purdue.edu}}



\subjclass[2010]{Primary 47A63; Secondary 26A51.}

\keywords{Operator monotone, operator convex, strongly operator convex.}


\begin{abstract}
We observe that if $f$ is a continuous function on an interval $I$ and $x_0\in I$, then $f$ is operator monotone if and only if the function $(f(x)-f(x_0))/(x-x_0)$ is strongly operator convex. Then starting with an operator monotone function $f_0$, we construct a strongly operator convex function $f_1$, an (ordinary) operator convex function $f_2$, and then a new operator monotone function $f_3$. The process can be continued to obtain an infinite sequence which cycles between the three classes of functions.  We also describe two other constructions, similar in spirit.  We prove two lemmas which enable a treatment of those aspects of strong operator convexity needed for this paper which is more elementary than previous treatments.  And we discuss the functions $\varphi$ such that the composite $\varphi\circ f$ is operator convex or strongly operator convex whenever $f$ is strongly operator convex.
\end{abstract} \maketitle

\bigskip
We are concerned with three classes of continuous real--valued functions on a non-degenerate interval $I$. The definition and main properties of operator monotone functions are due to Loewner \cite{L}, the main properties of operator convex functions are due to Bendat and Sherman \cite{BS}, and the theory of strongly operator convex function was given initially in \cite[Theorem 2.36]{B1} with an elementary treatment provided later in \cite{B2}.

We have just recently discovered a treatment of much of the theory of strongly operator convex functions even more elementary than the treatment in \cite{B2}.  The main result of \cite{B2}, Theorem 3.2, proves the equivalence of five conditions on a continuous function $f$ defined on an interval $I$ containing $0$.  For a function $f$ on a general interval $I$, $f$ is strongly operator convex if $f(\cdot +x_0)$ satisfies these conditions for some $x_0$ in $I$ (any such $x_0$ will do).  The requirement that $0$ be in $I$ is not very important and is discussed in \cite[Remark 3.3(i)]{B2}.  Of the five equivalent criteria for strong operator convexity provided by \cite[Theorem 3.2]{B2} only three are used in this paper:  Condition (ii) of \cite[Theorem 3.2]{B2} corresponds to (4) below, condition (iv) to (10) below, and condition (v) to (7) below.  Now the proof in \cite{B2} that condition (iv) of Theorem 3.2 implies condition (v) is self-contained and uses only elementary classical analysis.  Lemmas 0.1 and 0.2 below make possible very elementary proofs that \cite[Theorem 3.2(ii)]{B2} implies \cite[Theorem 3.2(iv)]{B2} and that \cite[Theorem 3.2(v)]{B2} implies \cite[Theorem 3.2(ii)]{B2}.  In both of these lemmas and in (4) the operators involved may be assumed to be finite matrices.

For all three classes of functions, there are multiple equivalent criteria, but the easiest (and original) definitions are in terms of operator inequalities.

\medskip\noindent
(1)\  The continuous function $f$ on $I$ is operator monotone if and only if  $h_1\le h_2$ implies $f(h_1)\le f(h_2)$ for self--adjoint operators $h_1$ and $h_2$ with spectra in $I$.

\medskip\noindent
(2)\  The continuous function $f$ on $I$ is operator convex if and only if \newline
$f(th_1+(1-t)h_2)\le tf(h_1)+(1-t)f(h_2)$ for $0\le t\le 1$ and $h_1, h_2$ self--adjoint with spectra in $I$.\newline
Davis \cite{D} showed a different criterion for operator convexity:

\medskip\noindent
(3)\  The continuous function $f$ on $I$ is operator convex if and only if $pf(php)p\le pf(h)p$ for a projection $p$ and a self--adjoint operator $h$ with spectrum in $I$.

\medskip\noindent
(4)\  The continuous function $f$ on $I$ is strongly operator convex if and only if $pf(php)p\le f(h)$ for a projection $p$ and a self--adjoint operator $h$ with spectrum in $I$.

It is obvious that the operator inequality in (4) implies the one in (3).

All three classes of functions can be characterized in terms of integrals:

\medskip\noindent
(5)\  (\cite{L}) The (necessarily continuous) function $f$ on $I$ is operator monotone if and only if $f(x)=ax+b+\int(1/(r-x)-1/(r-x_0))d\mu(r)$, where $a\ge 0$, $x_0\in I$, and $\mu$ is a positive measure on $\Bbb R\setminus I$ such that $\int 1/(r^2+1) d\mu(r)<\infty$.

\medskip\noindent
(6)\  (\cite{BS}) The (necessarily continuous) function $f$ on $I$ is operator convex if and only if 
\begin{equation*}
f(x)=ax^2+bx+c+\int{(x-x_0)^2\over(r-x)(r-x_0)^2} d\mu_+(r)+\int{(x-x_0)^2\over(x-r)(x_0-r)^2} d\mu_-(r),
\end{equation*}
where $a\ge 0$, $\mu_+$ is a positive measure on $\{r:\, r>I\}$, $\mu_-$ is a positive measure  on $\{r:\, r<I\}$,  $\int 1/(1+|r|^3) d\mu_\pm (r)<\infty$, and $x_0$ is in the interior of $I$. (The integrands are obtained from $\pm 1/(r-x)$ by subtracting the first degree Taylor polynomials at $x_0$.)

\medskip\noindent
(7)\  (\cite{B1} or \cite{B2}) The (necessarily continuous) function $f$ on $I$ is strongly operator convex if and only if $f(x)=a+\int 1/(r-x) d\mu_+(r)+\int 1/(x-r) d\mu_-(x)$, where $a\ge 0$, $\mu_+$ is a positive measure on $\{r:\, r>I\}$, $\mu_-$ is a positive measure on $\{r:\, r<I\}$, and 
$\int 1/(1+|r|) d\mu_\pm(r)<\infty$.
\medskip

It is an elementary exercise to deduce the following global criterion from (5) and (7):

\medskip\noindent
(8)\  The continuous function $f$ on $I$ is operator monotone if and only if the function $g$ where $g(x)=(f(x)-f(x_0))/(x-x_0)$, is strongly operator convex. Here $x_0$ can be any point in $I$.
\medskip

If $x_0$ is in the interior of $I$, then the domain of $g$ is $I$ (necessarily $f$ is real analytic in the interior of $I$), and if $x_0$ is an endpoint of $I$ the domain of $g$ is $I\setminus \{x_0\}$. The only comment that we will make about the proof is that the measures $\mu_\pm$ in (7) are obtained from the measure $\mu$ in (5) by multiplying by $1/|x_0-r|$.

Of course there is a well known global criterion for operator monotonicity due to Loewner namely, $f$ is operator monotone if and only if the holomorphic extension of $f$ maps the upper half plane into itself.

The following are two more previously known global criteria:

\medskip\noindent
(9)\  (\cite{BS} and \cite[Lemma 2.1]{U}) The continuous function $f$ on $I$ is operator convex if and only if the function $g$, where $g(x)=(f(x)-f(x_0))/(x-x_0)$, is operator monotone. Here  $x_0$ can be any point in $I$.

\medskip\noindent
(10)\  (\cite{B1} or \cite{B2}) The continuous function $f$ on $I$ is strongly operator convex if and only if either $f=0$ or $f(x)>0$, $\forall x$, and $- 1/f$ is operator convex.
\medskip

The following comment on (8) seems somewhat interesting: Suppose $g$ is a strongly operator convex function on $I$ and $b$ is a finite endpoint of $I$, say the right endpoint, which is not in $I$. Then if $f(x)=(x-b)g(x)$, $f$ can be extended to an operator monotone function $\tf$ on $I\cup\{b\}$ and $(\tf(x)-\tf(b))/(x-b)=g(x)-\delta/(b-x)$, where $\delta=\mu_+(\{b\})$. A similar comment applies to (9), and we do not know whether this was already known. (It is an elementary exercise to deduce (9) from (5) and (6), and it is this kind of reasoning which is used to derive the comment in question. Uchiyama's proof of \cite[Lemma 2.1]{U} used this kind of reasoning, but that result was stated only for open intervals.)

\begin{lemma}
Let $I$ be a non-degenerate interval, and let $f$ be a continuous function on $I$ such that $f(x)>0$, $\forall x$.  Then if $pf(php)p \le f(h)$ for all projections $p$ and self-adjoint operators $h$ with spectrum in $I$, it follows that $-1/f$ is operator convex.
\end{lemma}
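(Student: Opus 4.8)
The plan is to verify operator convexity of $-1/f$ through Davis's criterion (3): it suffices to show that $p\,g(php)\,p \le p\,g(h)\,p$ for every projection $p$ and self-adjoint $h$ with spectrum in $I$, where $g=-1/f$ (note $g$ is continuous on $I$ since $f>0$). Writing $a$ for the compression $php$ regarded as a self-adjoint operator on $pH$ (its spectrum lies in $I$) and $B=f(h)$, and using $p\,g(h)\,p=-pB^{-1}p$ together with $p\,g(php)\,p=-f(a)^{-1}$, this criterion unwinds to the single assertion
\[
pB^{-1}p \le f(a)^{-1}\qquad\text{on }pH.
\]
Since the operators may be assumed to be finite matrices, I would fix such $p,h$ and work in the block decomposition $H=pH\oplus(1-p)H$.

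First I would record that, because $f$ is continuous and strictly positive on the compact spectra involved, $B=f(h)$ and $f(a)$ are positive and invertible, with $B\ge\epsilon I$ for some $\epsilon>0$; in particular the corner $B_{22}=(1-p)B(1-p)$ is invertible on $(1-p)H$. Next I would rewrite the hypothesis $pf(php)p\le f(h)$ in block form. Its left side is $f(a)\oplus 0$, so the hypothesis says exactly that
\[
\begin{pmatrix} B_{11}-f(a) & B_{12}\\ B_{12}^{*} & B_{22}\end{pmatrix}\ge 0,
\]
where the $B_{ij}$ are the blocks of $B=f(h)$. The heart of the argument is then the Schur complement: positivity of this $2\times2$ block matrix together with invertibility of $B_{22}$ gives $f(a)\le B_{11}-B_{12}B_{22}^{-1}B_{12}^{*}$, and the standard block-inverse identity identifies the Schur complement on the right with $(pB^{-1}p)^{-1}$. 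Hence $f(a)\le(pB^{-1}p)^{-1}$, and inverting (inversion being order-reversing on positive invertible operators) yields $f(a)^{-1}\ge pB^{-1}p$, which is precisely the required inequality.

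The step I expect to be the real obstacle is recognizing that one must \emph{not} merely compress the hypothesis to $pH$ and attempt to chain inequalities. Compressing the right-hand side replaces $f(h)$ by $pf(h)p$ and, by the compression trick, only yields that $f$ itself is operator convex; moreover the naive chain collapses because both $f(a)^{-1}$ and $pB^{-1}p$ dominate $(p\,f(h)\,p)^{-1}$, so no comparison between them results. The extra strength of condition (4) — that the right-hand side is the full operator $f(h)$ rather than its compression — is exactly what the Schur complement converts into the inverted compression $pf(h)^{-1}p$, and identifying this as the mechanism is the key idea of the proof.
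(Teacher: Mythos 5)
Your proof is correct and follows essentially the same route as the paper: both verify Davis's criterion (3) for $-1/f$ by writing the hypothesis $pf(php)p\le f(h)$ in $2\times2$ block form and applying the Schur-complement characterization of positivity together with the block-inverse identity $(pf(h)^{-1}p)^{-1}=B_{11}-B_{12}B_{22}^{-1}B_{12}^{*}$. The only cosmetic difference is that the paper reduces the target inequality to $f(a)\le B_{11}-B_{12}B_{22}^{-1}B_{12}^{*}$ before invoking positivity, whereas you invert at the end; the content is identical.
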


\begin{proof}We represent operators by $2\times2$ matrices relative to $\b1 = p + (\b1 -p)$ and first establish some well known facts.  Suppose $k=\begin{pmatrix} a &b^*\\ b &c\end{pmatrix}$ where $c$ is positive and invertible.  Then 

$$ k=\begin{pmatrix} \b1 &b^*c^{-1}\\ 0 &\b1\end{pmatrix} \begin{pmatrix} a-b^*c^{-1}b &0\\ 0 &c\end{pmatrix} \begin{pmatrix} \b1 &0\\ c^{-1}b &\b1\end{pmatrix}.$$

It follows that $k\ge 0$ if and only if $a-b^*c^{-1}b \ge 0$ and that $k$ is positive and invertible if and only if $a-b^*c^{-1}b$ is positive and invertible.  If $k$ is invertible, then 

$$ k^{-1}=\begin{pmatrix} \b1 &0\\ -c^{-1}b &\b1\end{pmatrix} \begin{pmatrix} (a-b^*c^{-1}b)^{-1} &0\\ 0 &c^{-1}\end{pmatrix} \begin{pmatrix} \b1 &-b^*c^{-1}\\ 0 &1\end{pmatrix},
$$

whence $k^{-1}=\begin{pmatrix} (a-b^*c^{-1}b)^{-1} &*\\ * &*\end{pmatrix}$.

Now take $p$ and $h$ as above, let $k=f(h)$, and let $pf(php)p = \begin{pmatrix} y &0\\ 0 &0\end{pmatrix}$. We will prove $-1/f$ operator convex by using Davis' condition (3).  Thus we are given $\begin{pmatrix} y &0\\ 0 &0\end{pmatrix} \le \begin{pmatrix} a &b^*\\ b &c\end{pmatrix}$, and we need to prove $-y^{-1} \le -(a-b^*c^{-1}b)^{-1}$.  It is equivalent to prove $y \le a-b^*c^{-1}b$.  But the fact that $\begin{pmatrix} a-y &b^*\\ b &c\end{pmatrix} \ge 0$ implies $a-y-b^*c^{-1}b \ge 0$ as shown above.
\end{proof}

\begin{lemma}Let $f(x)=1/x$ for $x>0$.  Then $pf(php)p \le f(h)$ for all projections $p$ and positive invertible operators $h$.
\end{lemma}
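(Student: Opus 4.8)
The plan is to run the same block-matrix computation used for Lemma 0.1. First I would represent operators as $2\times2$ matrices relative to $\b1 = p + (\b1-p)$ and write $h = \begin{pmatrix} a & b^* \\ b & c\end{pmatrix}$. Since $h$ is positive and invertible it is bounded below by a positive multiple of $\b1$, so its upper-left corner $a$, which is $php$ viewed on the range of $p$, is positive and invertible; this is exactly what is needed for $f(php)$ to be meaningful, and it gives $pf(php)p = \begin{pmatrix} a^{-1} & 0 \\ 0 & 0\end{pmatrix}$ while $f(h) = h^{-1}$. The whole lemma thus reduces to the single operator inequality $\begin{pmatrix} a^{-1} & 0 \\ 0 & 0\end{pmatrix} \le h^{-1}$.

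Next I would write $h^{-1} = \begin{pmatrix} P & Q^* \\ Q & R\end{pmatrix}$ and consider $N = h^{-1} - \begin{pmatrix} a^{-1} & 0 \\ 0 & 0\end{pmatrix} = \begin{pmatrix} P - a^{-1} & Q^* \\ Q & R\end{pmatrix}$. Since $h^{-1}$ is again positive and invertible, its lower-right corner $R$ is positive and invertible, so the positivity criterion proved at the beginning of Lemma 0.1 applies: $N \ge 0$ if and only if $(P - a^{-1}) - Q^*R^{-1}Q \ge 0$, that is, if and only if $P - Q^*R^{-1}Q \ge a^{-1}$. The key step is then to identify the Schur complement $P - Q^*R^{-1}Q$ by applying the inversion formula of Lemma 0.1 to $k = h^{-1}$: that formula says the upper-left corner of $(h^{-1})^{-1}$ equals $(P - Q^*R^{-1}Q)^{-1}$. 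But $(h^{-1})^{-1} = h$, whose upper-left corner is $a$, so $P - Q^*R^{-1}Q = a^{-1}$ and the desired inequality holds with equality. Hence $N \ge 0$, which is the assertion.

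I expect the only real subtlety to be this self-referential use of Lemma 0.1: rather than estimating anything, one observes that the Schur complement of the lower-right corner of $h^{-1}$ is forced to equal $a^{-1}$, so that the inequality collapses to an identity. The preliminary observation that $a$ and $R$ are genuinely invertible --- which is what legitimizes both the functional calculus defining $pf(php)p$ and the two Schur-complement manipulations --- costs nothing beyond recalling that a positive invertible operator is bounded below, and since the operators may be taken to be finite matrices there are no further analytic points to check.
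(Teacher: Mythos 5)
Your proof is correct and follows essentially the same route as the paper: both reduce the lemma to the Schur-complement criterion established in the proof of Lemma 0.1 and observe that the relevant Schur complement collapses to an identity (so the inequality $pf(php)p\le h^{-1}$ holds because a certain block matrix is positive with Schur complement exactly zero). The only difference is cosmetic --- the paper writes out $h^{-1}$ explicitly in terms of the blocks of $h$ and verifies directly that the Schur complement of the difference vanishes, whereas you keep the blocks $P,Q,R$ of $h^{-1}$ abstract and identify $P-Q^{*}R^{-1}Q=a^{-1}$ by applying the inversion formula of Lemma 0.1 to $h^{-1}$ itself, which is a slightly cleaner bookkeeping of the same computation.
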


\begin{proof}We represent operators by $2\times 2$ matrices relative to $\b1 = (\b1 -p)+p$.  Thus if $h=\begin{pmatrix} a &b^*\\ b &c\end{pmatrix}$, we need to show 
$$
\begin{pmatrix} 0 &0\\ 0 &c^{-1}\end{pmatrix}\le h^{-1}=\begin{pmatrix} (a-b^*c^{-1}b)^{-1} &-(a-b^*c^{-1}b)^{-1}b^*c^{-1}\\ -c^{-1}b(a-b^*c^{-1}b)^{-1} &c^{-1}+c^{-1}b(a-b^*c^{-1}b)^{-1}b^*c^{-1}\end{pmatrix}.
$$
Thus we need to prove $\begin{pmatrix} (a-b^*c^{-1}b)^{-1} &-(a-b^*c^{-1}b)^{-1}b^*c^{-1}\\ -c^{-1}b(a-b^*c^{-1}b)^{-1} &c^{-1}b(a-b^*c^{-1}b)^{-1}b^*c^{-1}\end{pmatrix} \ge 0$,
and this follows from a symmetric version of the criterion in the previous proof.  We need:
$$c^{-1}b(a-b^*c^{-1}b)^{-1}b^*c^{-1}-[-c^{-1}b(a-b^*c^{-1}b)^{-1}](a-b^*c^{-1}b)[-(a-b^*c^{-1}b)^{-1}b^*c^{-1}]\ge 0,
$$
\noindent and this is obvious.
\end{proof}
We can now proceed with the main construction. Start with a non-constant operator monotone function $f_0$ on $I$. Choose $x_0$ in $I$ and let $f_1(x)=(f_0(x)-f_0(x_0))/(x-x_0)$. Since $f_1$ is a non-zero strongly operator convex function, we can define a (strictly negative) operator convex function $f_2$ by setting $f_2=-1/f_1$. Then choose $x_1$ in the domain of $f_2$ and let $f_3(x)=(f_2(x)-f_2(x_1))/(x-x_1)$, a new operator monotone function.

For example take $f_0(x)=x^\alpha$, $x\ge 0$, where $0<\alpha<1$, $x_0=1$ and $x_1=0$. Then $f_3(x)=(x^{\alpha-1}-1)/(x^\alpha-1)$ on $(0,\infty)$. The direct proof that $f_3$ is operator monotone, by checking that it maps the upper half place into itself, involves a somewhat non-trivial calculus problem. Therefore the fact that our construction can easily produce such a function and guarantee that it is operator monotone may be interesting.

The process can be continued to obtain an infinite sequence of functions such that each $f_{3n}$ is operator monotone, each $f_{3n+1}$ is strongly eperator convex, and each $f_{3n+2}$ is operator convex, provided that no $f_{3n+1}$ is 0. It is not hard to see that this will be the case unless $f_0$ is rational. If $f_0$ is rational, then the degree of $f_{3n+3}$, defined as the maximum of the degrees of the numerator and denominator, will be one less than the degree of $f_{3n}$, and the process will eventually terminate. The construction depends on an infinite sequence $\{x_i\}$ of points used in the transitions from $f_{3n}$ to $f_{3n+1}$ and from $f_{3n+2}$ to $f_{3n+3}$. The requirements for this sequence are a little complicated.  Since a negative convex function always has a finite limit at any finite endpoint of its domain, the functions $f_{3n+2}$ may always be considered to be defined on $\bar{I}$.  Thus $x_i$ for $i$ odd, which is used in the transition from $f_{3n+2}$ to $f_{3n+3}$ is always allowed to be an endpoint of $I$.  But if such an $x_i$ is an endpoint, then in general $x_{i+1}$ cannot be the same endpoint.

Another construction starts with an operator monotone function $f^*_0$. Then the strongly operator convex function $f^*_1$ is defined, as before, by $f^*_1(x)=(f^*_0(x)-f^*_0(x_0))/(x-x_0)$. But now we define a new operator monotone function $f^*_2$ by $f^*_2(x)=(f^*_1(x)-f^*_1(x_1))/(x-x_1)$. We can continue the process to obtain an infinite sequence which alternates between operator monotone functions and strongly operator convex functions. However, the theory of strongly operator convex functions is not really being used, since we need only the fact that $f^*_{2n+1}$ is operator convex. Therefore this construction could have been done a long time ago, and perhaps it has already been done. Also recall that since the measures, as in (5) and (7), for $f^*_{n+1}$ are obtained from those for $f^*_n$ by multiplying by $1/|r-x_n|$, all of the measures will be finite for $n\ge 2$, and they will have increasingly many finite moments as $n$ increases. Thus the operator monotone functions $f^*_{2n}$ will not be ''general'' but will have special properties. Finally, there is one more reason why we think that the $\{f^*_n\}$ process is less interesting than the original one. Suppose $f$ is a strictly positive continuous function on $I$ and $g=-1/f$. Then $g$ operator convex implies $f$ operator convex but the convese is false. Therefore if $h_1(x)=(g(x)-g(x_1))/(x-x_1)$ and $h_2(x)=(f(x)-f(x_1))/(x-x_1)$, then $h_1$ operator monotone implies $h_2$ operator monotone but not conversely. Therefore it is presumably more interesting to know that $h_1$ is operator monotone than to know that $h_2$ is operator monotone. If $f$ is the function $f_1$ or $f^*_1$ in the above constructions, then the first construction produces $h_1$ and the second produces $h_2$.

As an example, we can make the same choices as in the previous example, by setting $f^*_0(x)=x^\alpha$, $x_0=1$, and $x_1=0$. Then $f^*_2(x)=(x^{\alpha-1}-1)/(x-1)$ on $(0,\infty)$. We have claimed that $f^*_2$ is a less interesting example of operator monotonicity than the previous $f_3$, with $f_3(x)=(x^{\alpha-1}-1)/(x^\alpha-1)$. But the direct verification that $f^*_2$ maps the upper half plane into itself does not seem any easier than that for $f_3$. Thus the $\{f^*_n\}$ process may be interesting if it is really new.

It is also possible to run the first process backwards, though this is slightly problematical. Thus start with an operator monotone function $f_0$ on $I$ and let $f_{-1}(x)=f_0(x)(x-x_0)+c_0$. It is necessary to choose $c_0$ small enough that $f_{-1}(x)<0$, $\forall x$, and this is not always possible. But it is possible if both $I$ and $f_0$ are bounded. So if necessary we can restrict $f_0$ to a smaller interval in order to construct a suitable operator convex function $f_{-1}$. Then let $f_{-2}=-1/f_{-1}$, and take $f_{-3}(x)=f_{-2}(x)(x-x_1)+c_1$. So $f_{-3}$ is a new operator monotone function. By the comments after (10) it is permissible for some of the $x_i$'s to be endpoints of $I$ even if those endpoints are not in $I$. But if we want really to be running the original process backwards, it is necessary that the singletons $\{x_i\}$ have measure 0.

As a first example we can take $f_0(x)=x^\alpha$ on $[0,2]$, $0<\alpha<1$, $x_0=1$ and $x_1=0$, similarly to the above examples. Then $f_{-1}(x)$ might be $x^\alpha(x-1)-3$ and $f_{-3}(x)$ might be $x/(3-x^\alpha(x-1))$. The direct proof that $f_{-3}$ maps the upper half plane into itself is trivial. For a second example take $f_0(x)=x^\alpha-(2-x)^\alpha$ on $[0,2]$ and make the same choices for $x_0, x_1,c_0$, and $c_1$. Then $f_{-3}(x)=x/(3-(x-1)(x^\alpha-(2-x)^\alpha)$. A direct verification that $f_{-3}$ maps the upper half plane into itself does not seem trivial.

It might be interesting to investigate the behavior of $\{f_n\}, \{f^*_n\}$, and $\{f_{-n}\}$ as $n\to\infty$. We have not attempted this, since we have no expertise in dynamical systems, but the situation for $\{f^*_n\}$ is probably not very difficult. For $n\ge 2$ the $f^*_n$'s are determined completely by the measures appearing in (5) or (7) (the $1/(r-x_0)$ term in (5) can be omitted when $n\ge 2$), and it is easy to calculate the measures in terms of the measure for $f^*_0$ and the $x_i$'s. If the length of $I$ is 2 and we choose $x_i$ to be the midpoint of $I$ for all $i$, then $\{f^*_{2n}\}$ and $\{f^*_{2n-1}\}$ will have possibly non-zero limits as $n\to\infty$.

It is well known and trivial to prove that if $\varphi$ is both operator convex and operator monotone, then the composite $\varphi\circ f$ is operator convex whenever $f$ is operator convex.  (Note that the natural domain of $\varphi$ is necessarily unbounded to the left.  Thus if this domain contains any positive numbers, it must also contain $0$.) If $f$ is strongly operator convex, then $\varphi$ need not be operator convex.

\begin{proposition}
Let $\varphi$ be an operator monotone function on an interval $I$ and $f$ a strongly operator convex function whose range lies in $I$.

(i) If $0\in I$ and $\varphi(0) \ge 0$, then $\varphi\circ f$ is strongly operator convex.

(ii) If either $0\in I$ or $0$ is the left endpoint of $I$, then $\varphi\circ f$ is operator convex.
\end{proposition}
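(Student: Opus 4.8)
The plan is to verify both conclusions through the operator-inequality criteria: condition (4) for strong operator convexity in part (i), and Davis' condition (3) for operator convexity in part (ii). Both will be derived from a single inequality obtained by feeding the defining inequality $pf(php)p\le f(h)$ of strong operator convexity into the operator monotone function $\varphi$. As in the proofs of Lemmas 0.1 and 0.2, I would represent operators by $2\times2$ matrices relative to $\b1=p+(\b1-p)$ and may assume they are finite matrices. Let $J$ denote the domain of $f$, so that $\varphi$ is defined on $I$, $f$ on $J$, and the range of $f$ lies in $I$.

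First I would record the block computation. Writing $h=\begin{pmatrix} a &b^*\\ b &c\end{pmatrix}$, so that $a$ is the compression of $h$ to the range of $p$ (with spectrum in $J$), the convention for $pf(php)p$ used in Lemmas 0.1 and 0.2 gives $pf(php)p=\begin{pmatrix} f(a) &0\\ 0 &0\end{pmatrix}$, and condition (4) applied to $f$ yields
\begin{equation*}
\begin{pmatrix} f(a) &0\\ 0 &0\end{pmatrix}\le f(h).
\end{equation*}
The right-hand side has spectrum in the range of $f$, hence in $I$, while the left-hand side has spectrum $\{0\}\cup\mathrm{spec}(f(a))$; the only extra point is $0$. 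Thus, provided $0\in I$, operator monotonicity of $\varphi$ applies and, computing $\varphi$ of a block-diagonal operator blockwise, gives
\begin{equation*}
\begin{pmatrix} \varphi(f(a)) &0\\ 0 &\varphi(0)\end{pmatrix}\le\varphi(f(h)).
\end{equation*}
Here $p(\varphi\circ f)(php)p=\begin{pmatrix} \varphi(f(a)) &0\\ 0 &0\end{pmatrix}$ and $(\varphi\circ f)(h)=\varphi(f(h))$.

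For part (i), where $0\in I$ and $\varphi(0)\ge0$, the corner term is harmless, since $\begin{pmatrix} \varphi(f(a)) &0\\ 0 &0\end{pmatrix}\le\begin{pmatrix} \varphi(f(a)) &0\\ 0 &\varphi(0)\end{pmatrix}\le\varphi(f(h))$; this is exactly $p(\varphi\circ f)(php)p\le(\varphi\circ f)(h)$, so $\varphi\circ f$ is strongly operator convex by (4). For part (ii) with $0\in I$, I would instead compress the second display by $p$. Since compression preserves operator order and annihilates the $\varphi(0)$ corner, this gives $\begin{pmatrix} \varphi(f(a)) &0\\ 0 &0\end{pmatrix}\le p\varphi(f(h))p$, i.e. $p(\varphi\circ f)(php)p\le p(\varphi\circ f)(h)p$, which is Davis' condition (3); hence $\varphi\circ f$ is operator convex.

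The main obstacle is the remaining case of (ii), where $0$ is a left endpoint of $I$ but $0\notin I$: then $\varphi(0)$ need not be defined (it may even be $-\infty$, as for $-1/x$), so $\varphi$ cannot be applied directly to the block-diagonal operator carrying the eigenvalue $0$. I would handle this by perturbation. For small $t>0$ one has $t\in I$, and adding $t(\b1-p)$ to both sides of $\begin{pmatrix} f(a) &0\\ 0 &0\end{pmatrix}\le f(h)$ preserves the inequality while moving the troublesome eigenvalue to $t\in I$; applying $\varphi$ and compressing by $p$ yields $\begin{pmatrix} \varphi(f(a)) &0\\ 0 &0\end{pmatrix}\le p\,\varphi\!\left(f(h)+t(\b1-p)\right)p$. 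Because $f(h)$ has finite spectrum bounded away from $0$ inside $I$, the operators $f(h)+t(\b1-p)$ keep their spectra in a fixed compact subinterval of $I$ on which $\varphi$ is continuous, so letting $t\to0^+$ gives $p(\varphi\circ f)(php)p\le p\varphi(f(h))p=p(\varphi\circ f)(h)p$, again condition (3). Checking that these spectra remain inside $I$ and bounded away from $0$ as $t\to0^+$ is the one point requiring genuine care.
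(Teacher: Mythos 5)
Your route is genuinely different from the paper's. The paper proves both parts by decomposing $\varphi$ through the integral representation (5) and handling the building blocks $x/(r(r-x))$ and $1/(r-x)$ via (10) and the standard composition rule for operator monotone and convex functions; you instead verify the operator inequalities (4) and (3) directly by feeding $pf(php)p\le f(h)$ into the operator monotonicity of $\varphi$. For part (i), and for part (ii) in the case $0\in I$, your argument is correct and pleasantly self-contained: the blockwise computation of $\varphi$ on the block-diagonal operator is legitimate exactly because its spectrum is $\mathrm{spec}(f(a))\cup\{0\}\subseteq I$, and then either discarding the nonnegative $\varphi(0)$ corner (for (4)) or compressing it away by $p$ (for (3)) finishes the job. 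This is consistent with the paper's conventions (Davis-style reading of $pf(php)p$, reduction to finite matrices).

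The gap is in the endpoint case of (ii). Your claim that the operators $f(h)+t(\b1-p)$ ``keep their spectra in a fixed compact subinterval of $I$'' is false in general: the perturbation pushes the top of the spectrum upward by as much as $t$, and nothing in the hypotheses prevents $\max\mathrm{spec}(f(h))$ from equaling $\sup I$ with $\sup I\in I$. Concretely, take $I=(0,M]$ with the range of $f$ attaining $M$, and $\varphi(x)=-\sqrt{M-x}$, which is operator monotone on $I$ but has no real-valued extension past $M$; then $\varphi\bigl(f(h)+t(\b1-p)\bigr)$ is simply undefined for the relevant $h$ and $p$. You flag that this point ``requires genuine care,'' but the care is not supplied, and the additive perturbation cannot supply it. The fix is to perturb without raising the spectrum: for fixed $t\in I$ and $0<s<1$ use the convex combinations $(1-s)\,pf(php)p+st\b1\le (1-s)f(h)+st\b1$. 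Both sides now have spectra in the convex hull of $\mathrm{spec}(f(a))\cup\mathrm{spec}(f(h))\cup\{t\}$ together with the point $st\in(0,t]\subseteq I$, hence in $I$ for every $s$; apply $\varphi$, compress by $p$ (which annihilates the $\varphi(st)$ corner before it can blow up), and let $s\to0^+$ using continuity of $\varphi$ on a fixed compact subinterval of $I$. With that substitution your argument goes through.
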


\begin{proof}
(i)  We use (5) for $\varphi$ with $x_0 = 0$.  Let $\varphi_r(x)=1/(r-x)-1/r=x/r(r-x)$ for $r\notin I$.  Since $b=\varphi(0)\ge0$, it is sufficient to show $\varphi_r\circ f$ is strongly operator convex for each $r$.  By (10) this is equivalent to operator convexity of $-1/\varphi_r\circ f$.  Since $-1/\varphi_r\circ f = r-r^2/f$, this is clear.

(ii)  Again we use (5), but now $x_0$ need not be $0$.  Since operator convexity is preserved by additive constants, it is enough to show $g_r\circ f$ is operator convex for each $r\notin I$, where $g_r(x) =1/(r-x)$.  If $r$ is to the right of $I$, this follows from the fact that $g_r$ is operator monotone and operator convex.  If $r$ is to the left of $I$, then $r\le0$, and the result follows from (10), since $f-r$ is strongly operator convex.
\end{proof}

We do not know a reference for the following proposition, though we think it ought to be already known.

\begin{proposition}
If $\varphi$ is a function on an interval $I$, and if $\varphi\circ f$ is operator convex for every operator convex function $f$ with range in $I$, then $\varphi$ is operator monotone and operator convex.
\end{proposition}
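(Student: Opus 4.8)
The operator convexity of $\varphi$ is immediate and costs nothing: the identity map on $I$ is operator convex with range $I$, so $\varphi=\varphi\circ(\mathrm{id})$ is operator convex. The real content is operator monotonicity, and this is where the full strength of the hypothesis (quantification over \emph{all} operator convex $f$) must be used. The plan is to route everything through criterion (9) and Loewner's upper half-plane criterion.

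Since $\varphi$ is operator convex, (9) tells us that $g(x)=(\varphi(x)-\varphi(x_0))/(x-x_0)$ is operator monotone, hence extends to a Pick function $G$ with $\mathrm{Im}\,G\ge0$ on the upper half-plane. Because $\varphi(x)=\varphi(x_0)+(x-x_0)g(x)$, proving $\varphi$ operator monotone amounts to proving that $(z-x_0)G(z)$ is \emph{again} a Pick function. My key observation is that this follows once we also know that $x\,g(x)$ is operator monotone. Indeed, writing the Nevanlinna representation $G(z)=\gamma+\delta z+\int(1/(t-z)-t/(1+t^2))\,d\sigma(t)$ with $\delta\ge0$ and $\sigma\ge0$ supported off $I$, the requirement that $zG(z)$ be Pick forces $\delta=0$ (since $\delta z^2$ is never Pick) and forces $\sigma$ to be supported on the side of $I$ where $t-x_0>0$, with nonnegative linear coefficient; substituting back into $(z-x_0)G(z)$ then displays $\varphi$ in the form (5), i.e. as a Pick function. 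This last part is a short computation with the representation once the two monotonicity facts are in hand.

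The crux is therefore to produce the extra fact that $x\,g(x)$ is operator monotone, and this is exactly what the composition hypothesis supplies. After an increasing affine normalization of $I$ (which preserves both hypothesis and conclusion), I would choose an operator convex self-map of $I$ that is a monotone bijection onto a subinterval: for bounded $I$ take $f(x)=1/(s-x)$ with $s$ just to the right of $I$ (operator convex and operator monotone increasing), and for a half-line take $f(x)=1/x$ (operator convex and operator monotone decreasing); each has range inside $I$, and its inverse is again operator monotone. By hypothesis $\varphi\circ f$ is operator convex, so (9) applied to $\varphi\circ f$ says that $(\varphi(f(x))-\varphi(f(x_0)))/(x-x_0)$ is operator monotone; changing variables $u=f(x)$ and cancelling the explicit positive Jacobian factor converts this precisely into the statement that $u\,g(u)$ is operator monotone. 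Thus both $g$ and $x\,g$ are operator monotone, as required.

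The main obstacle I anticipate is the fine control of the measure $\sigma$: the argument above pins $\sigma$ down only up to possible atoms at the endpoints of $I$, and such an endpoint atom would correspond to a $1/(x-b)$ singularity in $\varphi$ that is operator convex but not operator monotone. Excluding these atoms requires invoking the hypothesis for a second test function that actually approaches the endpoint, for instance $f(x)=x^2$ near the left endpoint, whose composite $\varphi(x^2)$ would acquire a non-operator-convex $1/x^2$ term if such an atom were present. Organizing this endpoint analysis uniformly over the geometric types of $I$ is the delicate part; the case $I=\bR$ is separate but easier, since operator convexity on all of $\bR$ forces $\varphi$ to be quadratic, and the hypothesis then forces the quadratic coefficient to vanish and the linear one to be nonnegative, giving operator monotonicity directly.
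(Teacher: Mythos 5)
Your route is genuinely different from the paper's: the paper normalizes to $I=(-1,1)$, uses the single test function $f(x)=x^2$, kills the measure $\mu_-$ in (6) by analytically continuing $\tilde{\varphi}(z^2)$ across the imaginary axis, and then compares the integral representations of $\varphi$ and $\varphi(x^2)$ to eliminate the $x^2$ term and fix the sign of the linear coefficient. You instead run everything through (9) and a M\"obius test function. The idea is attractive, but as written the central inference has a genuine gap. Knowing that $g(x)=(\varphi(x)-\varphi(x_0))/(x-x_0)$ and $x\,g(x)$ are both operator monotone does \emph{not} imply that $(x-x_0)g(x)$ is operator monotone: on $I=(1,2)$ take $g(x)=1-1/x$, so that $xg(x)=x-1$; both are operator monotone, yet $(x-\tfrac32)g(x)=x-\tfrac52+\tfrac{3}{2x}$ is not even monotone near $x=1$. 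The reason is that $zG(z)$ being Pick forces the Nevanlinna measure $\sigma$ of $G$ to be supported in $\{t\ge 0\}$, not in $\{t\ge x_0\}$ as you assert, and when $0$ lies strictly to the left of $I$ there is room for mass of $\sigma$ in $[0,\inf I]$ where $t-x_0<0$. Your argument only works if the ``increasing affine normalization'' places the left endpoint of $I$ exactly at the origin, and you never say this; under that normalization the residual obstruction is indeed only a possible atom of $\sigma$ at the left endpoint, as you anticipate. (A cleaner fix: use $f(x)=c+1/(s-x)$ with $c$ in the interior of $I$ and to the left of $x_0$, and $s$ far to the right of $I$; the same change of variables then gives $(x-c)g(x)$ operator monotone, which forces $\sigma$ to be supported in $[c,\infty)\setminus I^{\circ}=[\sup I,\infty)$ and disposes of the endpoint atom at the same time.)

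Three further points need attention. First, $f(x)=1/(s-x)$ with $s$ just to the right of a bounded $I$ does not have range in $I$ (its range is an interval of large positive numbers), so you must compose with an increasing affine map to land inside $I$ before the hypothesis can be applied; this is harmless but necessary. Second, your exclusion of an endpoint atom via $\varphi(x^2)$ is not yet a proof: you need to restrict $x^2$ to an interval of the form $(0,\delta)$ so that the singularity is actually approached, and you need the lemma that an operator convex function on $(0,\delta)$ can blow up at $0^{+}$ at most like $1/x$ (which does follow from (6), but must be stated and checked) in order to rule out the $1/x^2$ term. Third, your case analysis omits half-lines unbounded to the left, where neither of your proposed test functions is an operator convex self-map; this is repaired by the standard observation, used explicitly in the paper, that operator monotonicity may be checked on bounded open subintervals and that the hypothesis passes to subintervals. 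With these repairs your argument goes through and is a legitimate alternative to the paper's proof.
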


\begin{proof}
Since $f$ can be the identity function on $I$, $\varphi$ must be operator convex (and in particular, continuous).  Since $\varphi$ is operator monotone if and only if its restriction to every bounded open subinterval of $I$ is operator monotone, we may assume $I$ is bounded and open.  If $a,b\in \Bbb R$ and $a>0$, then $f$ is operator convex if and only if $af+b$ is operator convex.  Also $\varphi$ is operator monotone if and only if the function $x\mapsto \varphi(ax+b)$ is operator monotone.  Therefore, after a linear change of variable, we are reduced to the case $I=(-1,1)$.

Now if $g(x)=\varphi(x^2)$, $-1<x<1$, we will deduce the operator monotonicity of $\varphi$ from the operator convexity of $g$.  Let $\tilde{\varphi}$ and $\tilde{g}$ be the holomorphic extensions of $\varphi$ and $g$ to $I\cup\{z\colon \text{Im}z\ne0\}$, and let $g_1(z) =\tilde{\varphi}(z^2)$.  Since the domain of $g_1$ is a connected open subset of the domain of $\tilde{g}$, we see from the principle of uniqueness of analytic continuation that $\tilde{g}$ extends $g_1$.  Thus, $g_1$ continues analytically across the imaginary axis and $\tilde{\varphi}$ continues analytically across the negative real axis.  So if we apply (6) to $\varphi$, we find that $\mu_{-}=0$.

Now applying (6) to $\varphi$ and $g$ with $x_0=0$, we have

\medskip\noindent
(11)\qquad $\varphi(x)=ax^2 + bx +c +\int{(1/(r-x)-1/r-x/r^2)} d\mu_+(r)$, and
 
\medskip\noindent
(12)\qquad $g(x)=\alpha x^2+\beta x+\gamma +\int{x^2\over s^2(s-x)} d\nu_+(s) -\int{x^2\over s^2(s-x)} d\nu_-(s)$.

There is a method for finding the measures $\mu_{\pm}$ for an operator convex function based on the fact that the Poisson kernel for the upper halfplane is given by $(1/\pi)\text{Im}(1/(r-z))$.  If $\tilde{h}$ is the holomorphic extension of an operator convex function $h$, one restricts $(\pm 1/\pi)\text{Im }\tilde{h}$ to lines parallel to the real axis and takes the distributional limit as the lines approach the real axis from above.  Using this and the identity $1/(r-z^2)=(1/2\sqrt r)((1/\sqrt r-z)+1/(\sqrt r+z))$, we see that $d\nu_+=(1/2s)d\mu_+(s^2)$.  Now since $\int{(1/s^3)} d\nu_+(s) <\infty$, we conclude that $\int{(1/s^4)} d\mu_+(s^2) <\infty$ and $\int{(1/r^2)} d\mu_+(r)<\infty$.

Now we can replace (11) by

\medskip\noindent
(11$^{'}$)\qquad $\varphi(x)=ax^2+b^{'}x+c+\int{(1/(r-x)-1/r)} d\mu_+(r)$,

\medskip\noindent
where $b^{'}=b-\int{(1/r^2)} d\mu_+(r)$.

Also, since $g$ is even, $d\nu_-(s)=d\nu_+(-s)$, and (12) simplifies to 

\medskip\noindent
(12$^{'}$)\qquad $g(x)=\alpha x^2 + \gamma + \int{(1/(s-x)+1/(s+x)-2/s)} d\nu_+(s)$.

Finally, by comparing (11$^{'}$) and (12$^{'}$) and recalling that $g(x) =\varphi(x^2)$, we have that $a=0$, $b^{'}=\alpha$, and $c=\gamma$.  Thus, by (5), $\varphi$ is operator monotone.
\end{proof}

\begin{remark}
It follows from the above proof that $f$ can be restricted to be a quadratic function.
\end{remark}

Using Proposition 0.4 and (10), it is easy to see that any function $\varphi$ which satisfies the conclusion of either part of Proposition 0.3 (for all $f$), must have an extension which satisfies the hypothesis.  It is also easy to find the functions $\varphi$ such that $f$ operator convex implies $\varphi\circ f$ strongly operator convex.  These are the operator monotone functions whose natural domain is unbounded to the left and which are non-negative on their natural domain.  An equivalent condition is that $\varphi$ must be both operator monotone and strongly operator convex.

There are other ways to derive new operator monotone functions from old ones.  One possibility is to start with an operator monotone function $f$, define an operator convex function $g$ by $g(x) = (x-x_0)f(x)$, and then get a new operator monotone function $h$ with $h(x) =(g(x)-g(x_1))/(x-x_1)$.  This technique was used by Hansen and Pedersen to derive \cite[Theorem 3.9]{HP}.  One could also use the same technique to derive a new strongly operator convex function $h$ from a given strongly operator convex function $f$.  This step could then be inserted into one of the processes given above.  Or, in the main process, instead of defining $f_{3n+2}=-1/f_{3n+1}$, one could take $f_{3n+2}=\varphi\circ f_{3n+1}$, where $\varphi$ satisfies part (ii) of Proposition 0.3.  Clearly there are so many ways to move from one of the three classes of functions to another (or to the same class) that it would be pointless to try to list them all.  Possibly the three processes that we described explicitly above are among the more interesting ways to do this.

\bibliographystyle{amsplain}

\begin{thebibliography}{99}

\bibitem{BS} J.~Bendat and S.~Sherman, 
\textit{Monotone and convex operator functions}, Trans.~Amer. Math Soc. {\bf 79} (1955), 58--71.

\bibitem{B1} L.G.~Brown, 
\textit{Semicontinuity and multipliers of $C^*$--algebras}, Can.~J.~Math.
\textbf{40} (1988), 865--988.

\bibitem{B2} L.G.~Brown, 
\textit{A treatment of strongly operator convex functions that does not require any knowledge of operator algebras}, Ann.~Funct.~Anal., to appear, arXiv \#  1407.5116

\bibitem{D} C.~Davis,
\textit{A Schwarz inequality for convex operator functions}, Proc.~Amer.~Math.~Soc.,
\textbf{8} (1957), 42--44.

\bibitem{HP} F.~Hansen and G.~K.~Pedersen,
\textit{Jensen's inequality for operators and L\"owner's theorem}, Math.~Ann.,
\textbf{258} (1982), 229--241.

\bibitem{L} K.~Loewner, 
\textit{\"{U}ber monotone Matrixfunktionen}, Math.~Z., 
\textbf{38} (1934), 177--216.

\bibitem{U} M.~Uchiyama,
\textit{Operator monotone functions, positive definite kernels and majorization}, Proc.~Amer.~Math.~Soc.,
\textbf{138} (2010), 3985--3996.

\end{thebibliography}

\end{document}